\newtheorem{thm}{Theorem}[section]
\newtheorem{lem}[thm]{Lemma}
\newtheorem{prop}[thm]{Proposition}
\newtheorem{defn}[thm]{Definition}
\title{A Note on John Simplex with Positive Dilation}
\author{Zhou Lu\footnote{This work is done during the author's visit at SQZ institution.} \\
Princeton University\\
\texttt{zhoul@princeton.edu}
}
\date{December 2020}
\begin{document}

\maketitle

\begin{abstract}
    We prove a John’s theorem for simplices in $R^d$ with positive dilation factor $d+2$, which improves the previously known $d^2$ upper bound. This bound is tight in view of the $d$ lower bound. Furthermore, we give an example that $d$ isn't the optimal lower bound when $d=2$. Our results answered both questions regarding John’s theorem for simplices with positive dilation raised by \cite{leme2020costly}.
\end{abstract}

\section{Introduction}
In the problem of linear function reconstruction (\cite{khachiyan1995complexity}, \cite{summa2014largest}, \cite{nikolov2015randomized}), one needs to efficiently reconstruct a linear function $f$ defined on a set $X$ by using a zeroth order oracle. When the cost of oracle querying grows along with accuracy, a John like simplex serves as a good basis as in \cite{leme2020costly}.

A crucial step of their method is to find a simplex $T$ with vertices in $X$, such that $X$ can be contained in some translate of $T$ with dilation, where a smaller (absolute value) dilation factor indicates higher efficiency. When considering negative dilation, a translate $-d T$ is able contain $X$ so the upper bound is $O(d)$ which matches its lower bound.

However, if we look at positive dilation, it seems to be less efficient than negative dilation because only a worse $d^2$ upper bound was given in \cite{leme2020costly}. Though a better bound for positive dilation does not immediately help design better algorithms for the reconstruction problem, it is natural to ask if we can improve it.

We prove a John's theorem for simplices with positive dilation factor $d+2$ which answers the question affirmatively. Furthermore, we give a counter-example that the $d$ lower bound given in \cite{leme2020costly} isn't tight when $d=2$.

\section{Background}
In the seminal work \cite{john2014extremum}, it's proven that any convex $X\subset R^d$ can be sandwiched between two concentric ellipsoids of ratio $d$. Specifically, the following theorem was given in \cite{john2014extremum}:

\begin{thm}
Given any convex body $X\subset R^d$, let $x+T B_d$ denote the minimal volume ellipsoid containing $X$, then it holds
\begin{equation}
    x+\frac{1}{d}T B_d\subset X\subset x+T B_d
\end{equation}
\end{thm}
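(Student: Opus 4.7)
The plan is to prove this by the classical ``smaller ellipsoid beats a spherical cap'' argument, exploiting the affine invariance of both the hypothesis and the conclusion.

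First, I would normalize. Both the property of being the minimum-volume enclosing ellipsoid and the desired containment transform covariantly under invertible affine maps, so applying $y\mapsto T^{-1}(y-x)$ reduces to the case $x=0$ and $TB_d=B_d$ the unit ball. The claim collapses to: if $B_d$ is the minimum-volume ellipsoid containing the convex body $X$, then $\tfrac{1}{d}B_d\subseteq X$.

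Next, I would argue by contradiction. Suppose some $p$ with $|p|\le 1/d$ fails to lie in $X$. Since $X$ is convex, the Hahn--Banach separation theorem yields a unit vector $v$ and a scalar $c$ with $\langle y,v\rangle\le c$ for every $y\in X$ and $\langle p,v\rangle>c$. Cauchy--Schwarz then forces $c<|p|\le 1/d$. After a rotation, I may take $v=e_1$, so that
\[
X\subset B_d\cap\{x_1\le c\},\qquad c<\tfrac{1}{d}.
\]
The task is now to exhibit an ellipsoid of volume strictly smaller than that of $B_d$ which still contains this spherical cap; this contradicts minimality of $B_d$ and finishes the proof.

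For the construction, I would try an axis-aligned ellipsoid
\[
E_{a,\alpha,\beta}=\Bigl\{x:\tfrac{(x_1-a)^2}{\alpha^2}+\tfrac{x_2^2+\cdots+x_d^2}{\beta^2}\le 1\Bigr\},
\]
shifted in the $-e_1$ direction ($a<0$), stretched slightly along $e_1$ ($\alpha>1$), and compressed in the orthogonal directions ($\beta<1$). Requiring $E_{a,\alpha,\beta}$ to pass through the pole $(-1,0,\ldots,0)$ and through the boundary circle $\{x_1=c,\ x_2^2+\cdots+x_d^2=1-c^2\}$ of the cap pins down $\alpha$ and $\beta$ as explicit functions of $a$ and $c$; containment of the entire cap then follows because the induced quadratic in $x_1$ on $[-1,c]$ attains its max at the endpoints. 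The volume ratio reduces to the one-variable expression $\alpha(a)\,\beta(a)^{d-1}$, which I would minimize in $a$.

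The main obstacle is the last, quantitative step: one must verify that this minimized ratio is strictly below $1$ \emph{exactly} in the regime $c<1/d$, i.e.\ that the threshold predicted by the separating argument matches the threshold coming out of the ellipsoidal optimization. This is a clean but careful single-variable calculus computation (differentiate $\log(\alpha\beta^{d-1})$ in $a$, solve for the critical point, substitute back, and compare against $1$). Every other ingredient---existence and uniqueness of the minimum-volume enclosing ellipsoid, affine invariance, the separation theorem---is standard convex geometry and can be invoked without further comment.
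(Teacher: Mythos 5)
The paper states this theorem purely as background, citing John's original work, and gives no proof of its own; so there is nothing internal to compare against and I am judging your argument on its own terms. Your overall strategy --- affine normalization to $TB_d=B_d$, strict separation of a point $p$ with $|p|\le 1/d$ from $X$ to trap $X$ in a cap $B_d\cap\{x_1\le c\}$ with $c<1/d$, and then covering that cap by an ellipsoid of strictly smaller volume to contradict minimality --- is indeed the classical route to the inner inclusion (the outer one being trivial), and the separation and normalization steps are fine.

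The concrete construction in the final step, however, is wrong, and the error is not cosmetic. You take $\alpha>1$ (stretched along $e_1$) and $\beta<1$ (compressed orthogonally). Containment of the cap reduces to $g(s)=\frac{(s-a)^2}{\alpha^2}+\frac{1-s^2}{\beta^2}\le 1$ for all $s\in[-1,c]$, since the binding points are those on the sphere. This quadratic has leading coefficient $\alpha^{-2}-\beta^{-2}$, which is negative in your regime, so $g$ is concave; if you additionally pin $g(-1)=g(c)=1$ (passing through the pole and the rim), concavity forces $g(s)>1$ for all $s\in(-1,c)$, i.e.\ your ellipsoid misses the entire spherical part of the cap strictly between the pole and the rim. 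Your assertion that the induced quadratic attains its maximum at the endpoints is exactly backwards: that holds for a convex quadratic, i.e.\ when $\alpha<\beta$. The correct ellipsoid is compressed along the cut direction and dilated in the orthogonal ones; for instance, for $c=0$ the minimal enclosing ellipsoid of the half-ball is centered at $-\frac{1}{d+1}e_1$ with $\alpha=\frac{d}{d+1}<1$ and $\beta=\frac{d}{\sqrt{d^2-1}}>1$, giving volume ratio $\frac{d}{d+1}\bigl(\tfrac{d^2}{d^2-1}\bigr)^{(d-1)/2}<1$. With that corrected shape the endpoint check is legitimate, the one-parameter optimization goes through, and the threshold does come out to $c=1/d$. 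So the skeleton of your proof is the standard and correct one, but as written the key lemma is established with an ellipsoid that does not in fact contain the cap; you need to flip the roles of the two semi-axes and redo the volume computation.
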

The ellipsoid $x+T B_d$ is called 'John ellipsoid'. For a more comprehensive review we refer the readers to \cite{henk2012lowner}. In this paper we consider general $X$ which is potentially non-convex, and we aim to find a John like simplex with positive dilation factor $O(d)$ so that it can be seen as an analogue to John ellipsoid.

\begin{defn}
Given $d+1$ points $v_1,...,v_{d+1}$ in $R^d$, we call their convex hull a simplex with vertices $v_1,...,v_{d+1}$.
\end{defn}

We consider the following: given a compact set $X\subset R^d$ which is not necessarily convex or connected, we want to find $d+1$ points in $X$ forming a simplex $T$ such that $X$ is contained in some translate of $T$ with positive dilation.

A simple method to do so is by considering the maximum volume simplex (MVS) of $X$ as in \cite{leme2020costly}. 

\begin{defn}
A maximum volume simplex $T$ with vertices in a bounded set $X\subset R^d$ is one of the simplices whose euclidean measure is no less than any other simplex of $X$.
\end{defn}

\begin{prop}
There always exists a MVS as long as $X$ is compact.
\end{prop}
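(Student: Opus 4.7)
The plan is to argue by compactness: parametrize the collection of simplices with vertices in $X$ by tuples of $d+1$ vertices, express volume as a continuous function of this tuple, and invoke the extreme value theorem.

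First, I would identify the space of candidate simplices with the product $X^{d+1} \subset \mathbb{R}^{d(d+1)}$, where a tuple $(v_1,\ldots,v_{d+1})$ corresponds to the simplex $\mathrm{conv}(v_1,\ldots,v_{d+1})$. Since $X$ is compact in $\mathbb{R}^d$, the product $X^{d+1}$ is compact by Tychonoff (or simply as a product of finitely many compact subsets of Euclidean space).

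Next, I would define the volume function $V : X^{d+1} \to \mathbb{R}_{\geq 0}$ by
\[
V(v_1,\ldots,v_{d+1}) \;=\; \frac{1}{d!}\,\bigl|\det[\,v_2-v_1,\; v_3-v_1,\; \ldots,\; v_{d+1}-v_1\,]\bigr|.
\]
This is a polynomial in the coordinates of the vertices composed with the absolute value, hence continuous. (Degenerate tuples where the vertices fail to be affinely independent simply contribute volume zero, which is consistent with the definition of simplex given above.)

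Finally, I would apply the extreme value theorem: the continuous function $V$ on the nonempty compact set $X^{d+1}$ attains its maximum at some tuple $(v_1^{*},\ldots,v_{d+1}^{*})$. The corresponding simplex has Euclidean measure no less than that of any other simplex with vertices in $X$, and is therefore an MVS. There is no real obstacle here; the only mild subtlety is noting that the definition allows degenerate simplices, so the argument works uniformly whether or not $X$ contains $d+1$ affinely independent points.
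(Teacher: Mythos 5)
Your proof is correct. It rests on the same underlying principle as the paper's --- compactness of $X$ plus continuity of the volume in the vertices --- but the execution is different and, frankly, cleaner. The paper runs a hand-rolled sequential argument: take simplices whose volumes tend to the supremum, extract a convergent subsequence of the first vertices, replace the first vertex by its limit, observe the volumes still converge, and iterate over the remaining $d$ vertices. That argument implicitly uses exactly the continuity you make explicit (the step ``their volumes still converge to $c$'' is precisely continuity of the determinant formula in $v_1$), so your version surfaces the one fact the paper leaves unstated. By packaging the candidates as the compact product $X^{d+1}$ and writing the volume as $\frac{1}{d!}\bigl|\det[v_2-v_1,\ldots,v_{d+1}-v_1]\bigr|$, you reduce everything to a single application of the extreme value theorem and avoid the bookkeeping of iterated subsequences. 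Your parenthetical about degenerate tuples is also a worthwhile observation the paper skips. The only pedantic caveat, which applies equally to the paper's proof, is that $X$ must be nonempty for $X^{d+1}$ to be nonempty; this is harmless.
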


\begin{proof}
Because $X$ is bounded, the volume of its simplex has a least upper bound $c\ge 0$, therefore we can find a sequence of simplices $T_j$ with vertices $v_1^{(j)},...,v_{d+1}^{(j)}$, whose volumes converge to $c$. Due to the compactness of $X$, any sequence in $X$ has a converging sub-sequence which converges to a point in $X$, thus we can find indexes $a_j$ such that $v_1^{(a_j)}$ converges to a point $\tilde{v}_1\in X$.

We consider a new sequence of simplices $T_{a_j}$ whose first vertex is replaced by $\tilde{v}_1$ while others keep unchanged. It's easy to see that their volumes still converge to $c$. Again we find a converging sub-sequence of $v_2^{(a_j)}$ which converges to a point $\tilde{v}_2\in X$, and repeat this procedure until we have all $\tilde{v}_i$. Then the simplex with vertices $\tilde{v}_i$ is the desired MVS.
\end{proof}

\cite{leme2020costly} prove the following lemma by using the 'maximum volume' property:

\begin{lem}\label{lem1}
Let $T$ be the MVS with vertices in $X$, then $X$ is contained in a translate of $-d T$.
\end{lem}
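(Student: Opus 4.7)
The plan is to express the MVS condition in barycentric coordinates with respect to $T$, and then identify a specific translate of $-dT$ as exactly the region cut out by those inequalities.

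Let $v_1,\dots,v_{d+1}$ be the vertices of $T$, and for each $i$ let $\lambda_i\colon \mathbb{R}^d \to \mathbb{R}$ be the unique affine function with $\lambda_i(v_j) = \delta_{ij}$; these satisfy $\sum_i \lambda_i(x) = 1$ identically. The first step is to compute, for any $x \in \mathbb{R}^d$, the volume of the simplex $T_i(x)$ obtained from $T$ by replacing $v_i$ with $x$. Since $T_i(x)$ and $T$ share the facet $F_i$ opposite $v_i$, the ratio of their volumes equals the ratio of the distances from $x$ and $v_i$ to the affine hyperplane of $F_i$, which is precisely $|\lambda_i(x)|$. Applying the MVS hypothesis $\mathrm{vol}(T_i(x)) \le \mathrm{vol}(T)$ for every $x \in X$ and every $i$ then yields $|\lambda_i(x)| \le 1$, and in particular $\lambda_i(x) \le 1$.

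The second step is to single out a good translate of $-dT$. Let $c = \tfrac{1}{d+1}\sum_i v_i$ be the centroid of $T$ and set $S = -dT + (d+1)c$, whose centroid is again $c$. Its vertices are $w_k = (d+1)c - d v_k = \sum_j v_j - d v_k$, and evaluating the affine coordinates gives the clean formula $\lambda_i(w_k) = 1 - d\,\delta_{ik}$. A short convex-combination calculation then shows that $S$ is exactly the polytope $\{x : \lambda_i(x) \le 1 \text{ for all } i\}$: given such an $x$, the weights $\mu_k := (1-\lambda_k(x))/d$ are nonnegative, sum to $1$ because $\sum_k \lambda_k(x) = 1$, and one checks by direct substitution that $\sum_k \mu_k w_k = x$.

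Combining the two steps gives $X \subseteq S$, a translate of $-dT$, as required. The main delicate point is the second step: one has to evaluate $\lambda_i$ at the vertices $w_k$ correctly and verify that the resulting half-space intersection coincides with $S$ rather than with a proper subset of it. Once that identification is in place, the MVS inequality from the first step plugs in directly and finishes the argument.
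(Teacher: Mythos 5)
Your proof is correct: the volume ratio $\mathrm{vol}(T_i(x))/\mathrm{vol}(T)=|\lambda_i(x)|$, the consequence $\lambda_i(x)\le 1$ on $X$, the formula $\lambda_i(w_k)=1-d\,\delta_{ik}$, and the convex-combination check with $\mu_k=(1-\lambda_k(x))/d$ all hold. Note that the paper itself does not prove this lemma (it is quoted from the cited work), but your argument is exactly the ``maximum volume property'' route the paper alludes to, and it is the barycentric-coordinate rendering of the same slab argument the paper uses for its $d+2$ theorem: your constraints $\lambda_i(x)\le 1$ are the half-spaces bounded by the hyperplanes $S_{v_i,i}$ there, and intersecting them gives the translate of $-dT$, just as intersecting the opposite half-spaces $\lambda_i(x)\ge -1$ gives the translate of $(d+2)T$.
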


Using lemma \ref{lem1} twice we get a $d^2$ upper bound for positive dilation. We are interested in proving tighter bounds for the positive dilation factor. Existing upper and lower bounds are $d^2$ and $d$ respectively as in \cite{leme2020costly}, in their paper two questions regarding John’s theorem for simplices with positive dilation were raised

\textbf{Question 1:}
Can we get a John’s theorem for simplices with positive dilation factor $O(d)$?

\textbf{Question 2:}
Given compact $X\subset R$, can we always find a triangle $T$ with vertices in $X$ so that $X$ is contained in some translate of $2T$?

We give an affirmative answer to question 1 by proving a $d+2$ upper bound, and provide a counter-example to question 2.

\section{$d+2$ upper bound}
In this section we prove a $d+2$ upper bound based on a simple observation: by fully exploiting the 'maximum volume' property, we can squeeze $-d T$ to a much smaller set which still contains $X$.

We already know that a simplex $T \in R^d$ can be covered by a translate of $-d T$, therefore a naive method (choose $T$ to be MVS and use $-d$ dilation twice) directly leads to a $d^2$ upper bound for positive dilation.

However, the first step of this method is extravagant in that we can finder a smaller set covering $X$ when $d>2$. Think about the following example in $R^3$: assume $T$ (the MVS of $X$) is the regular simplex with vertices $v_1,...,v_{4}$ and 0 as its center, then $-3 T$ has $-3 v_1,...,-3 v_{4}$ as its vertices. We denote $S_{v,i}$ to be the hyperplane parallel to hyperplane $v_1,...,v_{i-1},v_{i+1},...,v_{4}$, with $v$ lying on it. By the definition of $T$, $X$ lies between $S_{v_i,i}, S_{-\frac{5}{3}v_i,i}$ for any $i=1,...,4$, therefore much space is wasted in the 'cones' of $-3 T$.

\begin{figure}[h]
\centering
\includegraphics[width=10cm,height=8cm]{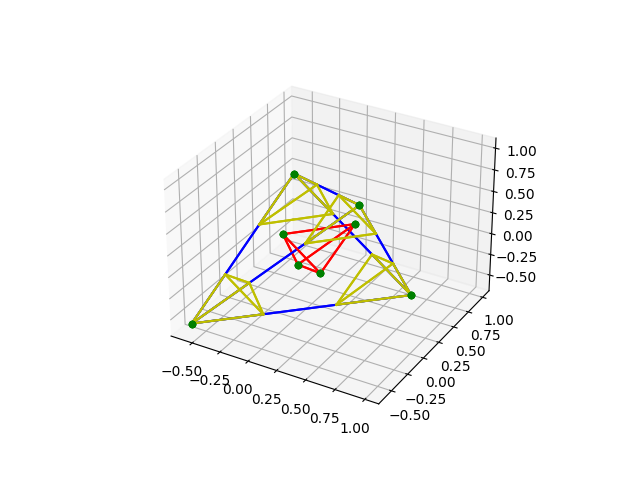}
\caption{Example in $R^3$. The red pyramid is $T$, the blue one is $-3 T$ and the yellow ones are the wasted space.}
\end{figure}

The $d^2$ bound can be decomposed as $1+(d+1)(d-1)$ where for each $v_i$, $T$ extends $d-1$ times along its direction. However, we only need to extend once which yields a $1+(d+1)=d+2$ upper bound.

\begin{thm}
For any compact set $X\in R^d$, there exists $d+1$ points in $X$ forming a simplex $T$ as its vertices, such that $X$ can be covered in a translate of $(d+2)T$. 
\end{thm}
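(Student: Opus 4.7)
The plan is to take $T$ to be an MVS of $X$ with vertices $v_1,\dots,v_{d+1}$, work in barycentric coordinates $x=\sum_i \lambda_i(x) v_i$ with $\sum_i \lambda_i(x)=1$, and first promote the maximum-volume property to the two-sided bound $|\lambda_i(x)|\le 1$ for every $x\in X$ and every $i$. The reason is that the simplex obtained by replacing $v_i$ with $x$ shares with $T$ the facet opposite $v_i$ as a common base, so its volume equals $|\lambda_i(x)|\cdot\mathrm{vol}(T)$; since the vertices $v_1,\dots,v_{i-1},x,v_{i+1},\dots,v_{d+1}$ all lie in $X$, Lemma~\ref{lem1}'s underlying MVS property forces this factor to be at most one. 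Hence $X$ sits inside the slab-intersection $S:=\{x:|\lambda_i(x)|\le 1\text{ for all }i\}$.

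The next step is to show that $S$ lies in a translate of $(d+2)T$. I would dilate $T$ by factor $d+2$ about its centroid $c=\frac{1}{d+1}\sum_i v_i$; in barycentric coordinates this centroidal dilation sends $\lambda_i\mapsto \frac{1-(d+2)}{d+1}+(d+2)\lambda_i$, so the image of $T$ is exactly $\{x:\lambda_i(x)\ge -1\text{ for all }i\}$, with vertices $(d+2)v_i-(d+1)c$. Translating by $(d+1)c$ recovers the vertices $(d+2)v_i$ of the dilate $(d+2)T$ proper, confirming this region is a genuine translate of $(d+2)T$. Since $|\lambda_i(x)|\le 1$ trivially implies $\lambda_i(x)\ge -1$, we conclude $X\subseteq S\subseteq (d+2)T+\text{(translate)}$.

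The main subtlety I anticipate is the sign/orientation bookkeeping in the volume-replacement step, namely verifying that the scaling factor really is $|\lambda_i(x)|$ and not $\lambda_i(x)$. This is precisely what allows one to rule out large negative barycentric coordinates, i.e., points of $X$ far on the side of the $v_i$-facet opposite to $v_i$; without the absolute value one would only obtain a one-sided bound, recovering essentially the weaker $d^2$ factor that comes from applying Lemma~\ref{lem1} twice. The identification of the centroidal dilate with $\{\lambda_i\ge -1\}$ and the final inclusion are then routine one-line affine computations.
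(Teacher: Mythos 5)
Your proposal is correct and is essentially the paper's own argument in different clothing: your bound $|\lambda_i(x)|\le 1$ is exactly the paper's statement that $X$ lies between the hyperplanes $S_{v_i,i}$ and $S_{\hat v_i,i}$, and your identification of $\{\lambda_i\ge -1\}$ as the centroidal $(d+2)$-dilate of $T$ is the paper's simplex $T'$. The barycentric phrasing makes the dilation-factor computation a one-liner, but the decomposition and the key use of the maximum-volume property are the same.
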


\begin{proof}
We choose the MVS of $X$ to be $T$ with vertices $v_1,...,v_{d+1}$ and we assume 0 is its center (of gravity) without loss of generality. We denote $\hat{v}_i$ to be the symmetric point of $v_i$ with respect to hyperplane $v_1,...,v_{i-1},v_{i+1},...,v_{d+1}$ along direction $v_i$, and $S_{v,i}$ to be the hyperplane parallel to hyperplane $v_1,...,v_{i-1},v_{i+1},...,v_{d+1}$, with $v$ lying on it. By the definition of $T$, $X$ lies between $S_{v_i,i}, S_{\hat{v}_i,i}$ for any $i=1,...,d+1$, otherwise the 'maximum volume' property will be contradicted.

We take a close look at the simplex $T'$ made up of $S_{\hat{v}_i,i}$. Obviously $T'$ is a translate of $T$ with positive dilation and center (of gravity) 0 unchanged. Define the intersection point between line $v_i \hat{v}_i$ (we overload the notation of vectors to denote endpoints of a segment when the context is clear) and hyperplane $v_1,...,v_{i-1},v_{i+1},...,v_{d+1}$ as $w_i$, we have that $v_i=\frac{d}{d+1}w_i v_i$ by the definition of center (of gravity). Combing with the fact that $w_i v_i=\hat{v}_i w_i$, the dilation factor is $(1+\frac{1}{d+1})/\frac{1}{d+1}=d+2$.

Denote the simplex made up of $S_{v_i,i}$ as $\tilde{T}$ and the region enclosed by $S_{v_i,i}, S_{\hat{v}_i,i}$ as $\hat{T}$, we have the following inclusion:
\begin{equation}
    X\subset \hat{T}=T' \cap \tilde{T} \subset T'
\end{equation}
which finishes our proof.
\end{proof}

\section{Lower Bound $>d$ when $d=2$}
We give a counter-example to question 2 in this section. The idea behind is intuitive: we construct several discrete points as a hard case, so that these discrete points won't help much when we construct the covering triangle, but hurts a lot when we consider covering them since the whole convex hull needs to be covered implicitly.

\begin{thm}
There exists a compact set $X\in R^2$, such that for any triangle $T$ with vertices in $X$, $X$ can't be contained in any translate of $2T$.
\end{thm}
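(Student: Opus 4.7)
My plan is to take $X$ to be the vertex set of a regular pentagon inscribed in the unit circle and to argue that for every triangle $T$ with vertices in $X$, no translate of $2T$ contains $X$. By the fivefold rotational symmetry of $X$, every such triangle falls into exactly one of two congruence classes, distinguished by the pattern of angular gaps between its three vertices: a 'spread' class with gaps $(1,2,2)$ (which is also the MVS) and a 'consecutive' class with gaps $(1,1,3)$. So it is enough to rule out both types.

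The main reduction is a standard support-function argument. Let $n_1,n_2,n_3$ denote the outward unit normals to the three sides of $T$, let $|e_1|,|e_2|,|e_3|$ denote the corresponding side lengths, and write $h_S(n)=\sup_{x\in S}\langle n,x\rangle$ for the support function of a set $S$. Then $X \subset 2T+t$ for some $t\in R^2$ iff the three linear constraints $\langle n_i,t\rangle \ge h_X(n_i)-2h_T(n_i)$ are simultaneously satisfiable. Combining them with weights $|e_i|$ and using the closed-polygon identity $\sum_i |e_i| n_i=0$ yields (via Farkas) the necessary and sufficient consistency condition
\begin{equation*}
\sum_i |e_i|\, h_X(n_i)\le 2\sum_i |e_i|\, h_T(n_i)=4|T|,
\end{equation*}
where the last equality uses the identity $|e_i|\,h_T(n_i)=2|T|/3$ valid when $h_T$ is measured from the centroid of $T$. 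So it suffices to verify that this inequality is strictly reversed in both triangle classes.

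For the pentagon $X$ on the unit circle, $h_X(n)=\cos\delta(n)$, where $\delta(n)$ is the angular distance from $n$ to the nearest pentagon vertex. I would then identify the three normals in each class: for the spread triangle, two of the three normals point exactly at an unused pentagon vertex (giving $h_X(n_i)=1$) while the third bisects a pair of adjacent vertices (giving $h_X(n_i)=\cos 36^\circ$); for the consecutive triangle all three normals bisect vertex pairs. Using the pentagon chord lengths $|e_i|\in\{2\sin 36^\circ, 2\sin 72^\circ\}$ and the identity $\cos 36^\circ=(\sqrt 5+1)/4$, the minimum admissible dilations simplify cleanly to $\sqrt 5$ for the spread class and $(3+\sqrt 5)/2$ for the consecutive class, both strictly greater than $2$. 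The main obstacle is the bookkeeping for the two different normal configurations; once set up correctly, the algebra collapses via the golden-ratio identities and the required strict inequality $\sum_i |e_i|\,h_X(n_i)>4|T|$ follows.
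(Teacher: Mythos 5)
Your proposal is correct, and I checked the two key numbers: for the ``spread'' triangle (gaps $1,2,2$) the normals sit at an angular distance $0^\circ,0^\circ,36^\circ$ from pentagon vertices, giving $\sum_i|e_i|h_X(n_i)=5\sin 72^\circ$ against $2\sum_i|e_i|h_T(n_i)=2(\sin 72^\circ+2\sin 36^\circ)$, i.e.\ minimal dilation exactly $\sqrt5\approx 2.236$; for the ``consecutive'' triangle (gaps $1,1,3$) all three normals give $h_X=\cos 36^\circ$ and one must remember that $h_T(n_i)$ is \emph{negative} on the long side (the circumcenter lies outside this obtuse triangle), which yields $(3+\sqrt5)/2\approx 2.618$. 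Both exceed $2$, so the construction works. This is a genuinely different route from the paper: the paper uses an asymmetric five-point configuration $\{A,B,C,D,E\}$ with free parameters $\epsilon,\delta$ and disposes of the six triangle types one by one via explicit intercept computations along particular lines, whereas you exploit the fivefold symmetry to reduce to two congruence classes and replace all the coordinate geometry with the single scalar criterion $\sum_i|e_i|h_X(n_i)\le 2\lambda\sum_i|e_i|h_T(n_i)=2\lambda\,|T|\cdot 2$, whose necessity follows from just the easy direction of Farkas (multiply the constraints $\langle n_i,t\rangle\ge h_X(n_i)-\lambda h_T(n_i)$ by $|e_i|$ and sum, using $\sum_i|e_i|n_i=0$); the full Farkas lemma is only needed if you want the criterion to be an equivalence, which your argument does not require. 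Your approach buys a cleaner case analysis, an exact answer (the best dilation over all triangles in the pentagon is $\sqrt5$, strictly stronger than the paper's bare ``$>2$''), and a reusable tool, at the cost of needing the support-function machinery and some care with signs of $h_T$ for obtuse triangles; the paper's approach is more elementary but longer and only establishes strict failure at dilation $2$. Two small points to tighten in a write-up: state explicitly that containment of the five points in a convex set is equivalent to containment of their convex hull (so $h_X$ of the finite set suffices), and justify that the only nonnegative dependencies among the three outward normals are multiples of $(|e_1|,|e_2|,|e_3|)$ (any two normals of a nondegenerate triangle are linearly independent, so the kernel is one-dimensional).
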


\begin{proof}
We pick 5 points: 
$$A=(-1,0), B=(1,0), C=(-\epsilon-\delta,1), D=(\epsilon+\delta,1), E=(0, \epsilon-1)$$
to construct $X=\{A,B,C,D,E\}$, where constants (TBD) satisfy $\epsilon, \delta \in (0,1)$. Due to symmetry, we discuss different choices of $T$ by 6 cases and how they lead to contradiction. 

\textbf{Case 1: $T= \triangle CDE$}

The intercept along $y=0$ of any translate of $2T$ has length at most $4(\epsilon+\delta)<2$, thus $AB$ can't be covered by any translate of $2T$.

\textbf{Case 2: $T= \triangle ABE$}

The intercept along $x=0$ of any translate of $2T$ has length at most $2-2\epsilon<2-\epsilon$, thus $C, E$ can't be simultaneously covered by any translate of $2T$.

\textbf{Case 3: $T= \triangle ACD$}

The intercept along $y=0$ of any translate of $2T$ has length at most $4(\epsilon+\delta)<2$, thus $AB$ can't be covered by any translate of $2T$.

\textbf{Case 4: $T= \triangle ABC$}

The intercept along $y=1$ of any translate of $2T$ has length at most $2\epsilon<2\epsilon+2\delta$ when its 'bottom' is below $y=\epsilon-1$, thus $CD$ and $E$ can't be simultaneously covered by any translate of $2T$.

\textbf{Case 5: $T= \triangle ACE$}

The intercept along $y=0$ of any translate of $2T$ has length at most $2-2\frac{(\epsilon+\delta)(1-\epsilon)}{2-\epsilon}<2$, thus $AB$ can't be covered by any translate of $2T$.

\textbf{Case 6: $T= \triangle ADE$}

We would like to prove that any translate of $2T$ can't cover $\triangle ABC$. We extend $\overrightarrow{AD}$ by twice to $D'=(1+2\epsilon+2\delta,1)$ and $\overrightarrow{AE}$ by twice to $E'=(1,2\epsilon-2)$, then try to move $\triangle ABC$ to fit in $\triangle A'D'E'$ where $A'=A$.

Because $A'+\overrightarrow{CB}=(\epsilon+\delta,-1)$, in order for $C$ to be contained in $\triangle A'D'E'$, $B$ must lie below line
\begin{equation}\label{eq1}
    y=\frac{1}{1+\epsilon+\delta}(x-\epsilon-\delta)-1
\end{equation}

Therefore the largest possible $y$-coordinate of $B$ is that of the intersection point between line \ref{eq1} and
\begin{equation}\label{eq2}
    y=\frac{2-\epsilon}{\epsilon+\delta}(x-1-\frac{2(\epsilon+\delta)(1-\epsilon)}{2-\epsilon})
\end{equation}

By straightforward computation, we have that 
\begin{equation}\label{eq3}
    y=-\frac{2}{\frac{2-\epsilon}{\epsilon+\delta}+1-\epsilon}
\end{equation}

is the largest possible $y$-coordinate of $B$. However, the intercept along line \ref{eq3} of $\triangle A'D'E'$ equals
\begin{align*}
    &\quad (2-2\epsilon-\frac{2}{\frac{2-\epsilon}{\epsilon+\delta}+1-\epsilon})\times \frac{1+\frac{(\epsilon+\delta)(1-\epsilon)}{2-\epsilon}}{1-\epsilon}\\
    &=2+\frac{(\epsilon+\delta)(1-\epsilon)}{2-\epsilon}-\frac{2(\epsilon+\delta)}{(1-\epsilon)(2-\epsilon)}\\
    &=2-\frac{(\epsilon+\delta)(1+2\epsilon-\epsilon^2)}{(1-\epsilon)(2-\epsilon)}<2
\end{align*}
thus $C, A, B$ can't be simultaneously covered by any translate of $2T$.

For the choice of constants, any constant pair additionally satisfying $\epsilon+\delta<\frac{1}{2}$ is feasible.

\end{proof}

\section{Conclusion}
In this note, we analyze John’s theorem for simplices with positive dilation and answer related open questions raised by \cite{leme2020costly}. We prove a tight $d+2$ upper bound which matches the $d$ lower bound, improving the previously known $d^2$ bound. We also give a simple counter-example showing that the $d$ lower bound isn't optimal.

\bibliography{Xbib}
\bibliographystyle{plainnat}

\end{document}